\definecolor{darkblue}{rgb}{0.0,0.0,0.6}
\title{Pinned algebraic distances determined by Cartesian products in $\mathbb{F}_p^2$}
\author{Giorgis Petridis}
\date{}
\theoremstyle{plain}
\newtheorem{theorem}{Theorem}[section]
\newtheorem{lemma}[theorem]{Lemma}
\theoremstyle{definition}
\newtheorem*{acknowledgement}{Acknowledgement}
\newtheorem*{notation}{Notation}
\newtheorem*{claim}{Claim}
\renewcommand*{\backref}[1]{}
\renewcommand*{\backrefalt}[4]{%
    \ifcase #1 (Not cited.)%
    \or        (p.~\!#2)%
    \else      (pp.~\!#2)%
    \fi}
\newcommand{\F}{\mathbb{F}_p} 
\newcommand{\Fq}{\mathbb{F}_q} 
\newcommand{\R}{\mathbb{R}} 
\newcommand{\ds}{\displaystyle} 
\begin{document}

\onehalfspacing

\pagenumbering{arabic}

\setcounter{section}{0}

\bibliographystyle{plain}

\maketitle

\begin{abstract}
Let $p$ be an odd prime and $A \subseteq \mathbb{F}_p$ be a subset of the finite field with $p$ elements. We show that $A \times A \subseteq \mathbb{F}_p^2$ determines at least a constant multiple of $\min\{p, |A|^{3/2}\}$ distinct pinned algebraic distances.
\end{abstract}

\section[Introduction]{Introduction}
\label{Intro}

\let\thefootnote\relax\footnotetext{The author is supported by the NSF DMS Grant 1500984.}

Erd\H{o}s proved in~\cite{Erdos1946} that a finite planar set $E \subset \R^2$ determines at least $\Omega(|E|^{1/2})$ distinct distances and conjectured it determines at least $|E|^{1-o(1)}$. 
The conjecture was proved by Guth and Katz after decades of partial progress in their seminal paper~\cite{Guth-Katz2015}. The corresponding conjecture for distances ``pinned'' at some point of $E$ remains open, with the best known bound being due to Katz and Tardos~\cite{Katz-Tardos2004}.

The question has been studied in the context of two-dimensional vector spaces over finite fields, where the algebraic distance between two points $u=(u_1,u_2)$ and $v=(v_1,v_2)$ is defined as
\[
\| u- v \| = (u_1-v_1)^2 + (u_2-v_2)^2.
\]
For notational brevity we denote the set of algebraic distances determined by $E$ by
\[
\Delta(E) = \{ \| u - v\| : u,v \in E\}
\]
and the set of algebraic distances determined by $E$ pinned at some $u \in E$ by
\[
\Delta_u(E) = \{ \| u - v\|  : v \in E\}.
\]

The algebraic structure of vector spaces $\Fq^2$ is very different than that of $\R^2$ and so peculiarities arise. For example, if $-1$ is a square in the field, say $i^2=-1$, then the \emph{isotropic line} $\{ (t, it) : t \in \Fq\}$ determines only one distance: 0. Subfields also pose obstructions to improving upon an Erd\H{o}s type lower bound $|\Delta(E)| = \Omega(|E|^{1/2})$, which can be obtained once $E$ is not contained in an isotropic line: If $E$ is the Cartesian product of a subfield, then $\Delta(E)$ is precisely the subfield and so $|\Delta(E)| = |E|^{1/2}$. 

Bourgain, Katz, and Tao worked over prime order fields $\F$ where $p$ is an odd prime congruent to $3 \pmod 4$ (and so $-1$ is not a quadratic residue) in their ground breaking paper~\cite[Theorem 7.1]{BKT2004}. In this setting the above obstructions do not occur and Bourgain, Katz, and Tao proved that for all $\delta>0$, there exists $c>0$ such that for all sets $E \subset \F^2$ that satisfy $|E| \leq p^{2-\delta}$ there exists $u \in E$ such that
\[
|\Delta_u(E)| \geq |E|^{\tfrac{1}{2} +  c}.
\]
Bourgain, Katz, and Tao credit their argument to Chung, Szemer\'edi, and Trotter (see the first line of the proof of Theorem 7.1 in~\cite{BKT2004}; and the last line of p.~\!1 in~\cite{CST1992}) and is based on a point-line incidence theorem. An explicit $c$ can be obtained by applying more recent point-line incidence results found in~\cite{AYMRS,Helfgott-Rudnev2011,Jones2016,SdZ}. The Stevens and de Zeeuw point-line incidence bound~\cite{SdZ} implies that, if $E$ is not contained in an isotropic line and satisfies the hypothesis $|E| = O(p^{15/11})$, then there exists $u \in E$ such that
\[
|\Delta_u(E)| = \Omega\left( |E|^{\tfrac{1}{2}+ \tfrac{1}{30}}\right).
\] 

For Cartesian products in $\F^2$ (prime $p$), Aksoy Yazici, Murphy, Rudnev, and Shkredov proved in~\cite{AYMRS} that $E = A \times A$ determines $\Omega(|E|^{\tfrac{1}{2} +  \tfrac{1}{16}})$ algebraic distances if $|E| = O(p^{16/15})$. It is natural to also express their result as $ |\Delta(A \times A)| = \Omega(|A|^{9/8}).$ Inserting the point-line incidence result of Stevens and de Zeeuw~\cite{SdZ} in the argument of Bourgain, Katz and Tao yields that there exists $u \in A \times A$ such that
\[
|\Delta_u(A \times A)| = \Omega\left(|A \times A|^{5/8}\right) = \Omega\left(|A|^{5/4}\right),
\]
under the condition $|A \times A| = O(p^{4/3})$. This is the best known bound in the literature. 

The above condition $|A \times A| = O(p^{4/3})$ is not restrictive. To see why one must  investigate the complementary question of determining a lower bound on $|E|$ so that $\Delta(E)$ is about as large as it can ever be, which was first studied by Iosevich and Rudnev in~\cite{Iosevich-Rudnev2007}. Iosevich and Rudnev showed that $|E| > 4 q^{3/2}$ implies $\Delta(E) = \Fq$. Chapman, Erdo{\u{g}}an, Hart, Iosevich and Koh proved that $|E| > q^{4/3}$ implies that $\Delta(E)$ contains a positive proportion of the elements of $\Fq$~\cite{CEHIK2012}. In the same paper, it was shown that if $E = A \times A$ is a Cartesian product and $|E|> q^{4/3}$, then there exists $u \in E$ such that $|\Delta_u(E)| > q/3$. Hanson, Lund and Roche-Newton strengthened this result in~\cite{HLRN2016} by establishing a similar result for all sets in $\Fq^2$: If $|E| > q^{4/3}$, then there exists $u \in E$ such that the pinned distance set $\Delta_u(E)$ contains a positive proportion of the elements of $\Fq$. Corresponding questions in $\Fq^d$ for $d\geq 3$ were  studied in~\cite{HIKR2011}. 
 
We prove a result on pinned algebraic distances for Cartesian products in $\F^2$, which goes beyond what can be achieved by current knowledge on point-line incidences and complements that of Chapman, Erdo{\u{g}}an, Hart, Iosevich and Koh.

\begin{theorem}\label{Dist}
Let $p$ be an odd prime and $A \subseteq \F$. There exist $a,b \in A$ such that 
\[
|\Delta_{(a,b)}(A \times A)| = \Omega(\min\{p, |A|^{3/2}\}).
\]
\end{theorem}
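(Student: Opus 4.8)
The plan is to locate a good pin on the diagonal. The key identity is
\[
\Delta_{(a,a)}(A\times A)=\{(a-x)^2+(a-y)^2 : x,y\in A\}=S_a+S_a,\quad S_a:=\{(a-x)^2 : x\in A\}.
\]
Writing $N=|A|$ and noting that $x\mapsto (a-x)^2$ is at most two-to-one, so $|S_a|\geq N/2$, the Cauchy--Schwarz bound $|S_a+S_a|\geq |S_a|^4/E(S_a)$ (with $E$ the additive energy) shows it suffices to find a single $a\in A$ with $E(S_a)=O(N^{5/2})$: this already gives $|\Delta_{(a,a)}(A\times A)|\gg N^{3/2}$.

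First I would dispose of large sets. If $N\geq p^{2/3}$ then $|A\times A|\geq p^{4/3}$, and the result of Chapman, Erdo\u{g}an, Hart, Iosevich and Koh quoted above already yields a pin with $\gg p\asymp \min\{p,N^{3/2}\}$ pinned distances. Hence I may assume $N\leq p^{2/3}$, so that $N^{3/2}\leq p$ and every sumset below has size $\leq N^2\leq p^{4/3}$, safely avoiding wrap-around. To produce the good $a$ I would bound the averaged energy
\[
\Sigma:=\sum_{a\in A}\#\{(x_1,x_2,x_3,x_4)\in A^4 : (a-x_1)^2-(a-x_2)^2=(a-x_3)^2-(a-x_4)^2\},
\]
which dominates $\sum_{a\in A}E(S_a)$; indeed $\Sigma=O(N^{7/2})$ forces some $a$ with $E(S_a)\leq \Sigma/N=O(N^{5/2})$, as required.

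Next I would turn $\Sigma$ into an incidence count. Factoring $(a-x_i)^2-(a-x_j)^2=(x_j-x_i)(x_i+x_j-2a)$ and setting $\delta=(x_2-x_1)-(x_4-x_3)$, the defining equation reads $2a\,\delta=(x_2^2-x_1^2)-(x_4^2-x_3^2)$. The degenerate locus $\delta=0$ forces the right-hand side to vanish as well, pins $\{x_1,x_2\}$ and $\{x_3,x_4\}$ to a common sum and difference, and leaves $a$ free, so it contributes only $O(N^3)$. On $\delta\neq 0$ the value of $a$ is determined, and rearranging as
\[
(a-x_1)^2-(a-x_3)^2=(a-x_2)^2-(a-x_4)^2
\]
exhibits the main term as the number of incidences between the Cartesian product $A\times A$ (the points $(x_2,x_4)$) and the family of $\leq N^3$ hyperbolas $(x_2-a)^2-(x_4-a)^2=\mathrm{const}$ indexed by $(x_1,x_3,a)$. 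Equivalently, lifting $(x,y)\mapsto (x,y,x^2-y^2)$ sends $A\times A$ to $N^2$ points on the ruled quadric $\{Z=X^2-Y^2\}$, and each hyperbola becomes the section by a plane with normal $(-2a,2a,1)$.

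The crux --- and the step I expect to be hardest --- is the bound $\Sigma=O(N^{7/2})$. A plain point--line count will not do: the gain must come from the fact that only the $N$ slopes $2a$, $a\in A$, occur. Over $\R$ this is transparent: if $L_{\geq k}$ counts the lines through $\geq k$ of the $\asymp N^2$ relevant points with an admissible slope, the ``few directions'' bound $L_{\geq k}\leq N\cdot N^2/k$ and the Szemer\'edi--Trotter bound $L_{\geq k}\ll N^4/k^3$ cross over at $k\asymp N^{1/2}$ and feed into $\sum_\ell|\,\cdot\,\cap\ell|^2=\sum_k k^2(L_{\geq k}-L_{\geq 2k})$ to give exactly $N^{7/2}$. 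Over $\F$ the Szemer\'edi--Trotter inequality is unavailable, so I would run Rudnev's point--plane incidence theorem on the lift above; its strength beyond point--line incidences is precisely what upgrades the incidence-limited exponent $N^{5/4}$ to $N^{3/2}$. The technical difficulties I anticipate are accounting for the multiplicities of the planes (the offset $(x_1-a)^2-(x_3-a)^2$ is highly non-injective in $(x_1,x_3)$) and controlling the up to $\asymp N$ points of the lift that are collinear along the two rulings of $\{Z=X^2-Y^2\}$; the assumption $N\leq p^{2/3}$ is what keeps Rudnev's main term dominant over its $|P||\Pi|/p$ term.
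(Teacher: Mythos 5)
Your overall architecture --- a Cauchy--Schwarz/energy reduction, the restriction to $|A|=O(p^{2/3})$, and Rudnev's theorem applied to the lift $(x,y)\mapsto(x,y,x^2-y^2)$ --- parallels the paper's, but your decision to pin only at diagonal points $(a,a)$ opens a gap at exactly the step you flag as the crux, and the route you sketch does not close it. The paper works with the full six-variable count $N=\#\{(u,v,w)\in E^3:\|u-v\|=\|u-w\|\}$, which splits as three point parameters $(u_1,v_2,w_2)$ against three plane parameters $(u_2,v_1,w_1)$; crucially the two coordinates of the pin land on \emph{opposite} sides, giving $|P|=|\Pi|=|A|^3$ with all points and planes distinct and $k\le 2|A|\ll |P|^{1/2}$, so Rudnev yields $N=O(|A|^{9/2})$ and the averaging lemma produces a good, in general off-diagonal, pin. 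Your $\Sigma$ is the restriction of this count to the $|A|$ diagonal pins: a five-variable count in which $a$ appears on both sides of the equation, so no balanced $3+3$ split exists. In the unbalanced configuration you propose ($|A|^2$ lifted points against $\le|A|^3$ planes counted with multiplicity) two things go wrong. First, the planes carry weights up to $2|A|$, and $\sum_{\pi}w(\pi)^2$ is precisely $\Sigma$ again, so Cauchy--Schwarz in the weights is circular, while a dyadic decomposition of the weights pushes the main term up to $|A|^4$. Second, Rudnev's error term $k|\Pi|$ is $\approx |A|\cdot|A|^3=|A|^4$, because --- as you yourself note --- the rulings $\{x-y=c\}$ and $\{x+y=c\}$ of $Z=X^2-Y^2$ each carry up to $|A|$ collinear points of the lift, and some of these ruling lines do lie in planes of your family (direction $(1,-1,2c)$ is orthogonal to the normal $(-2a,2a,1)$ exactly when $c=2a$). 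Either obstruction alone stalls the argument at $O(|A|^4)$, which is the trivial bound on $\Sigma$ and gives nothing.

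There is also evidence that the bound you need is genuinely delicate rather than a routine technicality: for an individual $a$ one can have $E(S_a)=\Theta(|A|^3)$, e.g.\ by arranging $A=a-\{t:t^2\in I\}$ for an interval $I$, so that $S_a$ is a positive-density subset of $I$ and $E(S_a)\ge |S_a|^4/|S_a-S_a|=\Omega(|A|^3)$, whence $|\Delta_{(a,a)}(A\times A)|=|S_a+S_a|=O(|A|)$. So $\Sigma=O(|A|^{7/2})$ cannot follow from a per-$a$ estimate; it would have to exploit cancellation across the sparse family of $|A|$ diagonal pins, and you supply no mechanism for this. Your $\R$-heuristic leans on Szemer\'edi--Trotter together with the few-slopes observation, which is exactly the point--line strength unavailable over $\F$; Rudnev's theorem does not recover it in this weighted, unbalanced, high-collinearity configuration. (The large-$|A|$ reduction via the Chapman--Erdo\u{g}an--Hart--Iosevich--Koh result is fine, though the paper achieves the same by passing to a subset of size $\Theta(p^{2/3})$.) The repair is the one the paper makes: let the pin range over all of $A\times A$ so that the count regains its bilinear $3+3$ structure.
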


Note that $ \Delta_{(a,b)}(A \times A) = \{ (a-c)^2 + (b-d)^2 : c,d \in A\}  =  (A-a)^2 + (A-b)^2$ and that, in the notation used above, the theorem states there exists $u \in E = A \times A$ such that $|\Delta_u(E)| = \Omega(\min\{p, |E|^{3/4}\}).$

Our proof is based on a simple averaging argument found in the paper of Bourgain, Katz, and Tao~\cite{BKT2004} and a point-plane incidence theorem of Rudnev from~\cite{Rudnev}. Our method, therefore, can be partly traced to the work of Guth and Katz (c.f. Section~\ref{RudSec}). It is possible to put the proof, which is reminiscent of an argument in~\cite{GPInc}, in the context of the image set theorem of Aksoy Yazici, Murphy, Rudnev, and Shkredov~\cite[Theorem 1]{AYMRS}, but we opted for a more direct albeit longer presentation. 
\begin{acknowledgement}
The author would like to thank Alex Iosevich and Misha Rudnev for generously sharing their insight; and the referee for a very careful reading of the paper and an insightful report.
\end{acknowledgement}
 
\begin{notation}
We use Landau's notation so that both statements $f = O(g)$ and $g = \Omega(f)$ mean there exists an absolute constant $C$ such that $f \leq C g$ and $f = \Theta(g)$ stands for $f=O(g)$ and $f= \Omega(g)$. The letter $p$ denotes a prime, $q$ a prime power, $\Fq$ the finite field with $q$ elements and $\Fq^d$ the $d$-dimensional vector space over $\Fq$. An isotropic line is a line such that any two points on that line are at distance $0$ from each other. 
\end{notation}

\section[Rudnev's Theorem]{Rudnev's point-plane incidences theorem}
\label{RudSec}

Given a point set $P$ and a collection of planes $\Pi$ in $\F^3$, a point-plane incidence is an ordered pair $(u, \pi) \in P \times \Pi$ such that $u \in \pi$. Theorem~\ref{Dist} is a consequence of the following point-plane incidence bound of Rudnev~\cite{Rudnev}. We state the theorem in the simplest form adequate for our purpose.
\begin{theorem}[Rudnev]\label{Rudnev}
Let $p$ be an odd prime, $P$ be a set of points in $\F^3$ and $\Pi$ be a set of planes in $\F^3$. Suppose that $|P| = |\Pi|= O(p^2)$ and denote by $k$ the maximum number of collinear points in $P$. The number of point-plane incidences is $O(|P|^{3/2} + k |P|)$.
\end{theorem}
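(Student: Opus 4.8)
The plan is to translate the point--plane incidence count into a count of intersecting pairs of lines in $\F^3$, the setting in which the algebraic machinery of Guth and Katz for lines in three-space can be brought to bear. I would work projectively throughout, regarding each element of $P$ as a point of $\mathbb{P}^3(\F)$ and each element of $\Pi$ as a hyperplane; this leaves the incidence relation intact and, crucially, removes any need to isolate degenerate (``vertical'') planes, which an affine normalisation of the form $z=ax+by+c$ would force one to treat separately (and which cannot all be made non-vertical by a single coordinate change once $|\Pi|$ is comparable to $p^2$).

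The engine of the reduction is the Pl\"ucker--Klein correspondence, which realises the lines of $\mathbb{P}^3$ as the points of a four-dimensional quadric $\mathcal K \subset \mathbb{P}^5$, in such a way that two lines of $\mathbb{P}^3$ meet precisely when the corresponding points of $\mathcal K$ are conjugate under the symmetric bilinear form polarising $\mathcal K$. Using this, I would attach to each point $p \in P$ and each plane $\pi \in \Pi$ a line $\ell_p$, respectively $\ell_\pi$, in $\F^3$, chosen so that lines within a single family are pairwise skew while $\ell_p$ and $\ell_\pi$ meet exactly when $p \in \pi$. Because $|P| = |\Pi|$, the two families then assemble into a single set $L$ of $N = O(|P|)$ lines, and the point--plane incidences are \emph{precisely} the intersecting pairs of $L$ (every intersecting pair is mixed, since same-family lines are skew). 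It therefore suffices to bound the number of intersecting pairs of lines of $L$.

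For that I would invoke the Guth--Katz bound for lines in three-space: a family of $N$ lines, no more than $m$ of which lie in any common plane and no more than $m$ in any common regulus, has $O(N^{3/2} + Nm)$ intersecting pairs. Unwinding the correspondence, the configurations that degrade the bound --- many lines of $L$ sharing a plane or a regulus --- pull back to a large collinear subset of $P$, so one may take $m = \Theta(k)$ and conclude $O(N^{3/2} + Nk) = O(|P|^{3/2} + k|P|)$, as required. Producing an explicit correspondence with the pairwise-skew and meeting properties above, and verifying that the geometric degeneracies of $L$ match the collinearity parameter $k$, is the delicate but ultimately bookkeeping portion of the argument.

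The genuine obstacle is that the Guth--Katz bound is proved over $\R$ by \emph{polynomial partitioning}, a tool with no analogue over $\F$, and a naive transcription breaks down at exactly this step. The resolution, and the reason the theorem survives in positive characteristic, is that the partitioning can be sidestepped: one isolates the purely algebraic heart of the Guth--Katz argument --- the classical fact, via the flecnode (Monge--Salmon--Cayley) polynomial, that an irreducible surface of degree $d$ which is neither a plane nor a cone carries at most $O(d^2)$ lines --- which holds over any field. The hypothesis $|P| = |\Pi| = O(p^2)$ enters precisely here: it forces the auxiliary surfaces built in the argument to have degree $O(|P|^{1/2}) = O(p)$, below the range in which the algebraic geometry of $\F^3$ can diverge from that of $\overline{\F}^3$, so that no degeneracy special to characteristic $p$ inflates the number of intersecting pairs beyond the characteristic-zero count.
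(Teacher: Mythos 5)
The paper itself contains no proof of Theorem~\ref{Rudnev}: it is imported as a black box from Rudnev's paper, with Section~\ref{RudSec} merely summarizing its provenance (Guth--Katz, the Klein--Pl\"ucker line-geometry formalism, ruled surfaces, the Klein quadric), so the only meaningful comparison is with that cited argument --- and your sketch reproduces it faithfully: the Klein-quadric reduction of point--plane incidences to intersecting pairs of two line families in three-space, followed by a characteristic-$p$ version of the Guth--Katz intersecting-pairs bound in which polynomial partitioning is replaced by the flecnode/Cayley--Salmon line count on low-degree surfaces, with the hypothesis $|P|=O(p^2)$ serving exactly to keep the auxiliary degrees $O(p)$. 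Two small caveats: the pairwise-skewness of each family is obtained by a generic choice of section/projection, which over $\F$ requires passing to a field extension (harmless, since incidences and the characteristic are preserved), and the flecnode argument does not hold ``over any field'' --- it breaks in small characteristic, which is why the theorem assumes $p$ odd --- but neither affects the correctness of your outline.
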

The proof of Rudnev's theorem has its roots in the solution to the Erd\H{o}s distinct distance problem for sets in $\R^2$ by Guth and Katz~\cite{Guth-Katz2015} and the Klein--Pl\"ucker line geometry formalism~\cite[Chapter~2]{Pottmann-Wallner2001}. So it depends on classical techniques such as the polynomial method (see~\cite{Guth2016}), and properties of ruled surfaces (see~\cite{Katz2014}) and the Klein quadric (see~\cite{RudnevSelig2016}). Applying Theorem~\ref{Rudnev} in the setting of Theorem~\ref{Dist} is similar to how Theorem~\ref{Rudnev} was applied by Aksoy Yazici, Murphy, Rudnev, and Shkredov in~\cite{AYMRS}
.

\section[An averaging argument]{An averaging argument}

The second ingredient in the proof of Theorem~\ref{Dist} is a simple observation that can indirectly be traced back at least to a paper of Chung, Szemer\'edi, and Trotter~\cite{CST1992} and which was first applied in the finite field context by Bourgain, Katz and Tao~\cite{BKT2004}. We state and prove it for sets in $\Fq^2$, though we only apply it in $\F^2$.

\begin{lemma}\label{CST}
Let $E \subseteq \Fq^2$ and $N$ be the number of solutions to 
\[
 \| u-v\| = \|u-w\| \text{ with } u,v,w \in E.
\] 
There exists $u \in E$ such that $\displaystyle |\Delta_u(E)| \geq \frac{|E|^3}{N}$.
\end{lemma}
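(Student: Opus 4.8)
The plan is to prove Lemma~\ref{CST} by a standard double-counting and Cauchy--Schwarz argument. First I would observe that the quantity $N$ counts triples $(u,v,w) \in E^3$ with $\|u-v\| = \|u-w\|$, which can be reorganized by pinning the first coordinate: for each fixed $u \in E$, the map $v \mapsto \|u-v\|$ partitions the other points of $E$ according to their (common) pinned distance from $u$. Writing $r_u(\delta) = |\{v \in E : \|u-v\| = \delta\}|$ for the number of points at pinned distance $\delta$ from $u$, the number of pairs $(v,w)$ with $\|u-v\| = \|u-w\|$ is exactly $\sum_{\delta} r_u(\delta)^2$, so that $N = \sum_{u \in E} \sum_{\delta \in \Delta_u(E)} r_u(\delta)^2$.

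The key step is then to apply Cauchy--Schwarz to each inner sum. For a fixed $u$, the values $r_u(\delta)$ are supported on the set $\Delta_u(E)$ and sum to $\sum_{\delta} r_u(\delta) = |E|$, since every $v \in E$ contributes to exactly one pinned distance. Cauchy--Schwarz therefore gives
\[
|E|^2 = \left( \sum_{\delta \in \Delta_u(E)} r_u(\delta) \right)^2 \leq |\Delta_u(E)| \sum_{\delta \in \Delta_u(E)} r_u(\delta)^2,
\]
which rearranges to $\sum_{\delta} r_u(\delta)^2 \geq |E|^2 / |\Delta_u(E)|$. Summing over all $u \in E$ yields $N \geq \sum_{u \in E} |E|^2 / |\Delta_u(E)|$.

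To finish, I would pass from the sum to the minimum: if $|\Delta_u(E)| < |E|^3/N$ held for every $u \in E$, then each term in the sum would satisfy $|E|^2/|\Delta_u(E)| > N/|E|$, and summing over the $|E|$ choices of $u$ would give $N > N$, a contradiction. Hence there must exist at least one $u \in E$ with $|\Delta_u(E)| \geq |E|^3/N$, as claimed. I do not anticipate a serious obstacle here; the only point requiring mild care is the bookkeeping in the definition of $N$ (whether the diagonal terms $v=w$ and the degenerate distance $\delta = 0$ are included), but these conventions affect the count only by lower-order amounts and do not change the conclusion, since the Cauchy--Schwarz lower bound on $\sum_\delta r_u(\delta)^2$ uses only the total mass $|E|$ and the support size $|\Delta_u(E)|$.
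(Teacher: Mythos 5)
Your proposal is correct and follows essentially the same route as the paper: both rewrite $N = \sum_{u}\sum_{\delta} r_u(\delta)^2$ and apply Cauchy--Schwarz using $\sum_{\delta} r_u(\delta) = |E|$; the only cosmetic difference is that you apply Cauchy--Schwarz to every $u$ and then pigeonhole by contradiction, whereas the paper first pigeonholes a $u$ with $\sum_{\delta} r_u(\delta)^2 \leq N/|E|$ and then applies Cauchy--Schwarz to that single $u$.
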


\begin{proof}
We begin by getting an alternative expression for $N$. We denote by $1_\mathcal{E}$ the indicator function for the event $\mathcal{E}$.
\begin{align*}
N 
& = \sum_{u, v, w \in E} 1_{\| u-v\| = \|u-w\|} \\
& =  \sum_{u, v, w \in E} \sum_{x \in \Fq} 1_{\| u-v\| = x = \|u-w\|} \\ 
& = \sum_{u \in E} \sum_{x \in \Fq} \sum_{v, w \in E} 1_{\| u-v\| = x = \|u-w\|} \\ 
& =\sum_{u \in E} \sum_{x \in \Fq} \left( \sum_{v \in E} 1_{\{\| u-v\| = x}\right)^2.
\end{align*}
Therefore there exists $u \in E$ such that
\[
\sum_{x \in \Fq} \left( \sum_{v \in E} 1_{\| u-v\| = x} \right)^2 \leq \frac{N}{|E|}.
\]
By the Cauchy-Schwarz inequality:
\[
|\Delta_u(E)| \geq \frac{\left( \sum_{x \in \Fq}  \sum_{v \in E} 1_{\| u-v\| = x} \right)^2}{\sum_{x \in \Fq} \left( \sum_{v \in E} 1_{\| u-v\| = x} \right)^2} \geq \frac{|E|^2}{N/|E|} = \frac{|E|^3}{N}.
\]
\end{proof} 
Note here that if $E$ is an isotropic line, then $N = |E|^3$; and if $E = F \times F$ is the Cartesian product of a subfield $F$, then $N = |F|^{5/2}$.

Bourgain, Katz and Tao~\cite[Theorem 7.1]{BKT2004}, in an argument they credit to Chung, Szemer\'edi, and Trotter~\cite{CST1992}, note that for every pair of distinct $v,w \in E$ that do not belong to an isotropic line, every $u$ such that $(u,v,w)$ contributes 1 to $N$ belongs to the perpendicular bisector of $v$ and $w$. Therefore, when there are no isotropic lines, for each fixed $w$ the number of $(u,v)$ such that $(u,v,w)$ contributes 1 to $N$ equals the number of incidences between $E\setminus\{w\}$ and a family of $|E|-1$ lines. This means that improving the state-of-the art on point-line incidence bounds gives improved bounds for $N$ and consequently for the pinned distance set. 

It should be noted here~\cite[last line on p.~\!1]{CST1992} that Theorem~\ref{Dist} (and in fact the stronger analogous statement for all point sets and not just Cartesian products) would follow from the argument of Bourgain, Katz, and Tao if there was in our disposal a point-line incidence bound comparable to what is known in $\mathbb{R}^2$~\cite{Szemeredi-Trotter1983}.

For Cartesian products we follow a different approach. We are able to avoid treating each $w$ separately by reducing the question to one about point-plane incidences in $\F^3$.

\section[Proof of Theorem 1.1]{Proof of Theorem~\ref{Dist}}

Recall that Theorem~\ref{Dist} states that for all $A \subseteq \F$ there exist $a,b \in A$ such that 
\[
|\Delta_{(a,b)}(A \times A)| = \Omega(\min\{p, |A|^{3/2}\}).
\]
Lemma~\ref{CST} reduces Theorem~\ref{Dist} to proving that $N = O(|A|^{9/2} + |A|^6 /p)$, a bound in line with other applications of Rudnev's theorem~\cite{AYMRS,GPProdDiff,RRS2016,RSS}. As we will see bellow, the condition $|P| = O(p^2)$ in Theorem~\ref{Rudnev} forces us to require $|A| = O(p^{2/3})$. For this technical reason, we prove the following claim.
\begin{claim}
Suppose that $A \subseteq \F$ satisfies $|A| = O(p^{2/3})$. There exists $u \in A \times A$ such that $|\Delta_u(A \times A)| = \Omega(|A|^{3/2})$.
\end{claim}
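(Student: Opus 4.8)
The plan is to apply Lemma~\ref{CST} to $E = A \times A$, which reduces the claim to establishing the bound $N = O(|A|^{9/2})$ on the number of solutions to $\|u-v\| = \|u-w\|$ with $u,v,w \in A \times A$ (under the assumption $|A| = O(p^{2/3})$, the term $|A|^6/p$ is dominated by $|A|^{9/2}$, and $\min\{p,|A|^{3/2}\} = \Omega(|A|^{3/2})$). So the entire task is to bound $N$ via Rudnev's point-plane incidence theorem (Theorem~\ref{Rudnev}).

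First I would write out the equation $\|u-v\| = \|u-w\|$ in coordinates. Writing $u = (u_1,u_2)$, $v=(v_1,v_2)$, $w=(w_1,w_2)$ with all coordinates in $A$, the condition $(u_1-v_1)^2+(u_2-v_2)^2 = (u_1-w_1)^2+(u_2-w_2)^2$ expands and the quadratic terms $u_1^2, u_2^2$ cancel from both sides. What remains is a relation that is \emph{bilinear} in the unknowns: after collecting terms it has the shape
\[
2u_1(w_1 - v_1) + 2u_2(w_2 - v_2) = (w_1^2 - v_1^2) + (w_2^2 - v_2^2).
\]
The key structural observation is that this is linear in $(u_1, u_2)$ on one side and depends on $(v_1,v_2,w_1,w_2)$ on the other. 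The plan is to interpret this equation as an incidence between a point determined by $u$ and a plane determined by the pair $(v,w)$ in $\F^3$. To fit Theorem~\ref{Rudnev} I would assign to each $u$ the point $(u_1, u_2, 1) \in \F^3$ (so $|P| = |A|^2$), and to each pair $(v,w)$ the plane with normal vector involving $(2(w_1-v_1), 2(w_2-v_2))$ and constant term $(w_1^2+w_2^2-v_1^2-v_2^2)$; then $N$ equals the number of incidences between these $|A|^2$ points and $|A|^4$ planes. Since the number of planes exceeds the number of points, I would balance the two sides — either by a standard weighting/multiplicity argument or by applying the dual form — so that both quantities are $O(p^2)$; the hypothesis $|A| = O(p^{2/3})$ is exactly what guarantees $|A|^2 = O(p^{4/3}) = O(p^2)$, and more care is needed to keep the effective number of planes within range.

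The main obstacle I anticipate is controlling the collinearity parameter $k$ in Theorem~\ref{Rudnev}, namely the maximum number of points lying on a common line, since the incidence bound is $O(|P|^{3/2} + k|P|)$ and a large $k$ would ruin the estimate. Here a line in the point set corresponds to a family of planes sharing a common axis, and I would need to argue that no line contains too many of the relevant points — concretely, that $k = O(|A|)$, so that $k|P| = O(|A|\cdot|A|^2) = O(|A|^3)$ is dominated by the $|P|^{3/2} = |A|^3$ main term after the weighting is accounted for. Degenerate configurations where many triples $(u,v,w)$ collapse (for instance $v = w$, contributing trivially, or pairs lying on isotropic directions) must be handled separately and shown to contribute a lower-order amount to $N$. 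Once $N = O(|A|^{9/2})$ is in hand, Lemma~\ref{CST} immediately yields $|\Delta_u(A\times A)| \geq |E|^3/N = |A|^6 / O(|A|^{9/2}) = \Omega(|A|^{3/2})$ for some $u \in A \times A$, completing the proof of the claim.
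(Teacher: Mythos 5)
Your overall strategy (Lemma~\ref{CST} plus Rudnev's point--plane theorem, after expanding $\|u-v\|=\|u-w\|$ into a bilinear relation) is the right one and matches the paper's, but the specific point--plane assignment you propose does not work, and the step you wave at (``balance the two sides \ldots more care is needed'') is exactly where the proof lives. Taking $P=\{(u_1,u_2,1)\}$ with $|P|=|A|^2$ and one plane per pair $(v,w)$ with $|\Pi|=|A|^4$ cannot yield $N=O(|A|^{9/2})$: Theorem~\ref{Rudnev} as stated requires $|P|=|\Pi|=O(p^2)$, which for the planes forces $|A|=O(p^{1/2})$ rather than $O(p^{2/3})$, and even the asymmetric versions of Rudnev's theorem give an incidence count of order $|P|^{1/2}|\Pi|=|A|^5$, which is worse than the target $|A|^{9/2}$ and only recovers the trivial pinned bound $\Omega(|A|)$. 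Your arithmetic also betrays the mismatch: you write $|P|^{3/2}=|A|^3$ as the ``main term,'' but $N\geq |A|^4$ already from the diagonal $v=w$, so no bound of that shape can be correct, and you never reconcile it with the claimed conclusion $N=O(|A|^{9/2})$.

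The missing idea is the asymmetric splitting of the six coordinates into two groups of three, which is what makes the Cartesian product structure essential. Writing the equation as
\[
(2u_1,\,v_2-w_2,\,v_2^2-w_2^2)\cdot(v_1-w_1,\,2u_2,\,1)=v_1^2-w_1^2,
\]
one takes $P=\{(2u_1, v_2-w_2, v_2^2-w_2^2): u_1,v_2,w_2\in A\}$ and one plane for each triple $(u_2,v_1,w_1)\in A^3$, so that $|P|=|\Pi|=\Theta(|A|^3)$; then $|P|=O(p^2)$ is exactly the hypothesis $|A|=O(p^{2/3})$, the collinearity parameter is shown to be $k\leq 2|A|$ by a short case analysis on lines, and Rudnev gives $O(|P|^{3/2}+k|P|)=O(|A|^{9/2})$. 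The degenerate cases $v_1=w_1$ or $v_2=w_2$ are set aside first and contribute $O(|A|^4)$. Without this rebalancing of variables between points and planes, the argument does not close.
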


The claim implies the theorem because, if $A \subseteq \F$ has $|A| = \Omega(p^{2/3})$, then we pass to a subset $A' \subseteq A$ with $|A'|= \Theta(p^{2/3})$. By the claim, there exists $u \in A'$ such that
\[
|\Delta_u(A \times A)| \geq |\Delta_u(A' \times A')| = \Omega(|A'|^{3/2}) =  \Omega(p).
\] 

Our first task in proving the claim is to express $N$ using the coordinates of $u,v,w$, noting that the coordinates are elements of $A$. By standard properties of inner product $N$ is the number of solutions to
\[
2 u_1 (v_1-w_1) + 2 u_2 (v_2 - w_2) + (v_2^2-w_2^2) = (v_1^2-w_1^2)\;~ u_i, v_i, w_i \in A.
\]
We treat differently the case where $v_1 = w_1$ or $v_2=w_2$. The number of solutions where $v_1=w_1 =x$ for any given $x \in A$. We must have $v_2 = w_2$ or $2u_2 = - v_2 - w_2$. In either case there are at most $|A|^3$ solutions. Treating the case where $v_2 = w_2$ identically, we see there are at most $4 |A|^4$ solutions with $v_1=w_1$ or $v_2 = w_2$. 

Assuming from now on that $v_1 \neq w_1$ and $v_2 \neq w_2$ we express the above equation as
\[
(2 u_1,  v_2 - w_2 , v_2^2-w_2^2) \cdot (v_1-w_1 , 2 u_2 ,1) = v_1^2-w_1^2, \; v_i, w_i \in A, \, v_i \neq w_i.
\]
This reduces the question to a point-plane incidence bound for which we apply Theorem~\ref{Rudnev}. The details are as follows.

\emph{First step}: Define a set of distinct points
\[
P = \{ (2 u_1,  v_2 - w_2 , v_2^2-w_2^2) : u_1, v_2, w_2 \in A\}
\] 
and a family of distinct planes 
\[
\Pi = \{ \{ x \in \Fq^2: x \cdot (v_1-w_1 , 2 u_2 ,1) = v_1^2-w_1^2\}  : u_2, v_1, w_1 \in A\}.
\]
The points and planes are distinct because, for distinct $\alpha, \beta \in \F$, the ordered pair $(\alpha - \beta, \alpha^2 - \beta^2)$ determines uniquely the ordered pair $(\alpha - \beta, \alpha + \beta)$. Since the characteristic is odd, the ordered pair $(\alpha - \beta, \alpha + \beta)$ determines uniquely the ordered pair $(\alpha, \beta)$. Moreover, since the characteristic is odd, $2 \alpha$ uniquely determines $\alpha$. 

Our choice of $P$ and $\Pi$ ensures that the number of point-plane incidences between $P$ and $\Pi$ is precisely the contribution to $N$ coming from $v_i \neq w_i$. 

\emph{Second step}: The cardinalities of $P$ and $\Pi$ satisfy $|P|=|\Pi| = |A|^3 -2|A|^2 +|A| \leq |A|^3$.

\emph{Third step}: Our assumption that $|A| = O(p^{2/3})$ implies that $|P| \leq |A|^3  = O(p^2)$.

\emph{Fourth step}: The maximum number $k$ of collinear points in $P$ is at most $2|A|$. Recall that the elements of $P$ are of the form $(2 u_1,  v_2 - w_2 , v_2^2-w_2^2)$ with $u_1,v_2,w_2 \in A$, $v_2 \neq w_2$. To prove that the maximum number $k$ of collinear points in $P$ is at most $2|A|$ we consider two different cases. 

Lines not contained in any plane of the form \{$X = $ constant\} intersect each plane $\{ X = 2u\}$ in at most one point. So they are incident to at most $|A|$ points from $P$.

For lines contained in a plane of the form $\{X = 2u\}$  we seek to bound the maximum number of collinear points of the form $(v_2 - w_2 , v_2^2-w_2^2) \in \F^2$ with $v_2,w_2 \in A$. We distinguish between three types of lines: $\{Y = $ constant\}, $\{Z = $ constant\} and $\{Z = mY + b\}$. Lines of the form $\{ Z = \kappa\}$ are incident to at most $2|A|$ elements of the form $(v_2 - w_2 , v_2^2-w_2^2)$ with $v_2,w_2 \in A$, because for each $v_2$ there are at most two $w_2 \in \F$ such that $v_2^2  - w_2^2 = \kappa$. Similarly, lines of the form $\{ Y = \kappa\}$ are incident to at most $|A|$ elements of the form $(v_2 - w_2 , v_2^2-w_2^2)$ with $v_2,w_2 \in A$. Finally, lines of the form $\{ Z = m Y + b\}$ are incident to at most $2|A|$ elements of the form $(v_2 - w_2 , v_2^2-w_2^2)$ with $v_2,w_2 \in A$, because $v_2$ and $w_2$ must satisfy $ v_2^2 - w_2^2 = m (v_2 - w_2) + b$ and for each $v_2$ there are at most two $w_2 \in \F$ that satisfy the resulting quadratic equation.

\emph{Fifth step}: Theorem~\ref{Rudnev} implies that $N$, which equals the number of point-plane incidences between $P$ and $\Pi$ plus $O(|A|^4)$, satisfies $\ds N = O(|A|^{9/2} + |A|^4) = O(|A|^{9/2}).$

\emph{Sixth step}: Lemma~\ref{CST} implies that there exists $u \in A \times A$ such that
\[
|\Delta_u(A \times A)| \geq \frac{|A \times A|^3}{N} = \Omega\left( \frac{|A|^6}{|A|^{9/2}}\right) = \Omega(|A|^{3/2}) =  \Omega(|A \times A|^{3/4}).
\]

\phantomsection

\addcontentsline{toc}{section}{References}

\bibliography{all}

\vskip 0.5cm

\hspace{20pt} Department of Mathematics, University of Georgia, Athens, GA, USA.

\hspace{20pt} \textit{Email address}: \href{mailto:giorgis@cantab.net}{giorgis@cantab.net}

\end{document}